\author{Julian Pook \\ Universit\'e Libre de Bruxelles}
\title{Homogeneous and Locally Homogeneous Solutions to Symplectic Curvature Flow}
\date{\today}
\begin{document}
 \theoremstyle{definition}
 \newtheorem{definition}{Definition}
 \newtheorem*{fact}{Fact}
 \newtheorem*{remark}{Remark}
 \theoremstyle{lemma}
 \newtheorem{theorem}{Theorem}
 \newtheorem{lemma}{Lemma}
 \newtheorem{corollary}{Corollary}
 \newtheorem{proposition}{Proposition}
 \newtheorem*{criterion}{Criterion}
 \newtheorem{claim}{Claim}

 \newcommand{\sA}{\mathcal{A}}
 \newcommand{\sS}{\mathcal{S}}
 \newcommand{\sC}{\mathcal{C}}
 \newcommand{\sD}{\mathcal{D}}
 \newcommand{\sH}{\mathcal{H}}
 \newcommand{\sI}{\mathcal{I}} 
 \newcommand{\sR}{\mathcal{R}}
 \newcommand{\bR}{\mathbb{R}}
 \newcommand{\bC}{\mathbb{C}} 
 \newcommand{\bN}{\mathbb{N}}
 \newcommand{\bZ}{\mathbb{Z}}  
 \newcommand{\bT}{\mathbb{T}}
 \newcommand{\bM}{\mathbb{M}}
 \newcommand{\1}{\mathbf{1}}
 
 \newcommand{\Ric}{\operatorname{Ric}}
 \newcommand{\ad}{\operatorname{ad}}
 \newcommand{\tr}{\operatorname{tr}}
 \newcommand{\supp}{\operatorname{supp}}
 \renewcommand{\Re}{\operatorname{Re}}
 \renewcommand{\Im}{\operatorname{Im}}
 \renewcommand{\ker}{\operatorname{ker}}
 \newcommand{\ima}{\operatorname{ima}} 
 \newcommand{\aut}{\operatorname{aut}}
 \renewcommand{\span}{\operatorname{span}}

 \newcommand{\wt}{\widetilde}
 \newcommand{\ol}{\overline}
 \newcommand{\into}{\hookrightarrow}
 \newcommand{\onto}{\twoheadrightarrow}

\maketitle

%%%%%%%%%%%%%%%%%%%%%%%%%%%%

\centerline{  {\bf Abstract} }
In \cite{StreetsTian} J. Streets and G. Tian introduce symplectic curvature flow (SCF), a geometric flow on almost K\"ahler manifolds generalising K\"ahler--Ricci flow. The present article gives examples of explicit solutions to SCF of non-K\"ahler structures on several nilmanifolds and on twistor fibrations over hyperbolic space studied by J. Fine and D. Panov in \cite{FinePanov1}. The latter lead to examples of non-K\"ahler static solutions of SCF which can be seen as analogues of K\"ahler--Einstein manifolds in K\"ahler--Ricci flow.

\section{Introduction}

The aim of this article is to present explicit non-K\"ahler solutions to symplectic curvature flow recently introduced by  J. Streets and G. Tian in \cite{StreetsTian}. 
Symplectic curvature flow on an almost K\"ahler manifold $(M^{2n},\omega_0,J_0)$ is given by a system of coupled evolution PDEs for the symplectic structure $\omega$ and the almost complex structure $J$ with initial conditions $\omega(0) = \omega_0, \, J(0) = J_0$. Explicitly, 
\begin{eqnarray*}
  \partial_t \omega &=& -2P \\
  \partial_t J &=& -2 g^{-1}\left[P^{(2,0)+(0,2)} \right] + \mathcal{R} \, .
\end{eqnarray*}
Here, $P$ denotes the Chern--Ricci form given by $2i$ times the curvature of the Chern connection on the almost anti-canonical bundle $\Lambda^{n,0}(TM)$ and $P^{(2,0)+(0,2)}$ is the sum of the $(2,0)$ and $(0,2)$ part of $P$. The musical isomorphism $g^{-1}$ raises the second index, i.e. $g(g^{-1}P^{(2,0)+(0,2)} \xi, \eta) = P^{(2,0)+(0,2)}(\xi,\eta)$. Finally, $\mathcal{R} := [Rc,J]$ is the $J$-anti-linear part of $Rc$, where $Rc$ denotes the Riemann--Ricci curvature tensor $\Ric$ viewed as an endomorphism of the tangent bundle via $g$.    \par
Key properties of this flow proved in \cite{StreetsTian} include parabolicity, short time existence and preservation of the almost K\"ahler property of $\omega$ and $J$. Furthermore, if the initial $J_0$ is integrable, i.e. $(M^{2n},\omega_0,J_0)$ is K\"ahler, then $P$ is the K\"ahler--Ricci form and $\partial_t J=0$, so in this case SCF reduces to K\"ahler--Ricci flow. \par
In Section~\ref{Sec:CpStSol} we show that SCF on certain twistor fibrations over hyperbolic space lead to compact non-K\"ahler static solutions, while Section~\ref{Sec:SCFLeftInvar} presents explicit solutions to SCF of left-invariant structures on select nilmanifolds.  \\ 

\hspace{-14pt} \textbf{Acknowledgements:} The author wishes to thank Joel Fine for the helpful discussions as well as for bringing SCF to his attention, Michel Cahern for pointing out the moment map trick in the proof of Proposition~\ref{Prop:1d} and the FNRS for providing a generous ``Aspirant'' doctoral scholarship.

\section{Compact non-K\"ahler Static Solutions to SCF \label{Sec:CpStSol}}

The SCF equations can be readily solved if $\partial_t \omega = \lambda \omega_0, \, \partial_t J =0, \lambda \in \bR$, in which case the flow acts by rescaling the metric: $\omega(t) = (1+\lambda t) \omega_0, \, J(t) = J_0$. Such solutions are called \emph{static} and in the K\"ahler setting this behaviour is exhibited by K\"ahler--Einstein metrics. We present examples of compact static solutions to SCF in dimensions $n(n+1)$ which cannot be K\"ahler if $n>1$. They are constructed from the twistor fibrations 
\begin{equation*}
 \pi \, : \, Z_{2n} \to H^{2n} \,,
\end{equation*}
where the fibre over each point in $2n$-dimensional hyperbolic space $H^{2n}$ consists of all almost complex structures compatible with the standard hyperbolic metric on $H^{2n}$ inducing a fixed orientation. These spaces are examples of symplectic twistor spaces described by A. Reznikov in \cite{Reznikov}. J. Fine and D. Panov showed in \cite{FinePanov1} that $Z_{2n}$ can be realised as a coadjoint orbit. We follow their approach to define a symplectic structure $\omega$ and a compatible almost complex structure $I$ on $Z_{2n}$ and show that $(Z_{2n},\omega,I)$ is a static solution to SCF. Furthermore, these static solutions descend to compact quotients of $Z_{2n}$ with hyperbolic fundamental group which cannot support any K\"ahler structures if $n>1$. We find that the flow shrinks the metric if $n>2$, expands it if $n=1$ and leaves it invariant in the case $n=2$. 

\subsection{Coadjoint Orbit Description} 

Consider $\mathrm{SO}(2n,1)$, the identity component of the group of isometries of $\bR^{2n+1}$ with Lorentzian metric. Its Lie algebra is given by
\begin{equation*}
 \mathfrak{so}(2n,1) =  \left\{
 \begin{pmatrix}
  0 & \mathbf{u}^t \\ \mathbf{u} & A 
 \end{pmatrix}  \biggl  |  \,  \mathbf{u} \in \bR^{2n}, \, A \in \mathfrak{so}(2n)   \right\} \,.
\end{equation*}
In this description, $\mathrm{SO}(2n)$ can be seen as a subgroup of $\mathrm{SO}(2n,1)$ defined as the stabiliser of $(1,0) \in \bR \times \bR^{2n} = \bR^{2n+1}$.
A choice of almost complex structure $J_0 \in \mathfrak{so}(2n)$ on $\bR^{2n}$ defines an element 
\begin{equation*}
 \xi_0 := \begin{pmatrix}
  0 & 0 \\ 0 & J_0 
 \end{pmatrix} \in \mathfrak{{}so}(2n,1)
\end{equation*}
 and singles out a copy of $\mathrm{U}(n)$ inside $\mathrm{SO}(2n) \subset \mathrm{SO}(2n,1)$ as the stabiliser of $\xi_0$ under the adjoint action (the matrices in $\mathrm{SO}(2n,1)$ commuting with $\xi_0$ are those with $\mathbf{u} = 0$ and $AJ_0 = J_0A$). Set
\begin{equation*}
 \mathcal{O}(\xi_0) \cong \mathrm{SO}(2n,1)/\mathrm{U}(n)
\end{equation*} 
to be the adjoint orbit of $\xi_0$. The Killing form on $\mathfrak{so}(2n,1)$ is non-degenerate and defines an isomorphism $\mathfrak{so}(2n,1) \cong \mathfrak{so}(2n,1)^*$ intertwining the adjoint and coadoint action of $\mathrm{SO}(2n,1)$, so $\mathcal{O}(\xi_0)$ can be seen as a coadjoint orbit. Standard theory then endows  $\mathcal{O}(\xi_0)$ with a $\mathrm{SO}(2n,1)$-invariant symplectic structure $\omega$. \par 
An explicit description of the tangent space of $\mathcal{O}(\xi_0)$ can be given with the help of the following lemma.
\begin{lemma} \label{Lem:Decomp}
As a $\mathrm{U}(n)$ representation space, $\mathfrak{so}(2n,1)$ admits the equivariant  decomposition
\begin{equation*} 
 \mathfrak{so}(2n,1) \cong \mathfrak{u}(n) \oplus \Lambda^2 (\bC^n)^* \oplus \bC^n \, ,
\end{equation*}
where $\bC^n = (\bR^{2n},J_0)$.
\end{lemma}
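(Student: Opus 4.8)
The plan is to first peel off the $\bC^n$ summand from the block form of the matrices, and then to decompose the remaining $\mathfrak{so}(2n)$ block into its parts commuting and anticommuting with $J_0$.

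I would begin by computing the adjoint action of a group element $\mathrm{diag}(1,g) \in \mathrm{U}(n)$ (so $g \in \mathrm{SO}(2n)$ with $gJ_0 = J_0 g$); a direct calculation using $g^{-1} = g^t$ gives
\[
 \mathrm{Ad}_{\mathrm{diag}(1,g)}\begin{pmatrix} 0 & \mathbf{u}^t \\ \mathbf{u} & A\end{pmatrix} = \begin{pmatrix} 0 & (g\mathbf{u})^t \\ g\mathbf{u} & gAg^{-1}\end{pmatrix}.
\]
Hence the decomposition $\mathfrak{so}(2n,1) = \mathfrak{so}(2n) \oplus \bR^{2n}$ into the $A$-slot and the $\mathbf{u}$-slot is $\mathrm{U}(n)$-invariant. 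On the $\mathbf{u}$-slot the action is $\mathbf{u}\mapsto g\mathbf{u}$, the restriction of the standard representation; since every such $g$ is complex linear for $J_0$, this slot is exactly $\bC^n = (\bR^{2n}, J_0)$, which gives the last summand.

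It remains to decompose the $A$-slot $\mathfrak{so}(2n)$ under $A \mapsto gAg^{-1}$. As $g$ commutes with $J_0$, the subspaces $\mathfrak{u}(n) = \{A : AJ_0 = J_0 A\}$ and $\mathfrak{m} = \{A : AJ_0 = -J_0 A\}$ are each $\mathrm{U}(n)$-invariant, and the splitting $A = \tfrac12(A - J_0 A J_0) + \tfrac12(A + J_0 A J_0)$ into commuting and anticommuting parts shows $\mathfrak{so}(2n) = \mathfrak{u}(n) \oplus \mathfrak{m}$. The first summand is by definition the stabiliser subalgebra $\mathfrak{u}(n)$, so only the identification $\mathfrak{m} \cong \Lambda^2(\bC^n)^*$ remains.

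For this I would use the metric to view $\mathfrak{so}(2n) \cong \Lambda^2(\bR^{2n})^*$ via $A \mapsto g(A\,\cdot\,,\cdot)$ and pass to the type decomposition $\Lambda^2\big((\bR^{2n})^* \otimes \bC\big) = \Lambda^{2,0} \oplus \Lambda^{1,1} \oplus \Lambda^{0,2}$ induced by $J_0$. One checks that $\mathfrak{u}(n)$ corresponds to the real $(1,1)$-forms (the $J_0$-invariant forms) and $\mathfrak{m}$ to the real points of $\Lambda^{2,0} \oplus \Lambda^{0,2}$; projection onto the $(2,0)$-component is then a $\mathrm{U}(n)$-equivariant real isomorphism $\mathfrak{m} \xrightarrow{\sim} \Lambda^{2,0} = \Lambda^2(\bC^n)^*$, using $(V^*)^{1,0} = (\bC^n)^*$. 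Equivalently, one equips $\mathfrak{m}$ with the complex structure $A\mapsto J_0 A$, which commutes with the action. The main obstacle is exactly this last bookkeeping: verifying that an anticommuting $A$ sends the $+i$-eigenspace $V^{1,0}$ into the $-i$-eigenspace $V^{0,1}\cong (V^{1,0})^*$ antisymmetrically, so that $\mathfrak{m}$ is $\Lambda^2$ of the dual $(\bC^n)^*$ rather than of $\bC^n$, and confirming the map is complex linear for the chosen structure. The dimension check $n^2 + n(n-1) + 2n = n(2n+1) = \dim\mathfrak{so}(2n,1)$ finally shows no summand has been omitted.
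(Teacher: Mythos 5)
Your proof is correct and follows essentially the same route as the paper's: computing the adjoint action of $\mathrm{U}(n)$ to split off the $\bC^n$ slot, then decomposing $\mathfrak{so}(2n)$ into the parts commuting and anticommuting with $J_0$ and identifying the latter with $\Lambda^2(\bC^n)^*$. The paper only sketches this step (deferring details to Fine--Panov), so your verification of the type decomposition and of why the anticommuting part is $\Lambda^2$ of the \emph{dual} of $\bC^n$ simply fills in the same argument rather than taking a different one.
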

\begin{proof} 
We only sketch the proof here and refer to \cite{FinePanov1} details. 
Let $U \in \mathrm{U}(n) \subset \mathrm{SO}(2n) \subset \mathrm{SO}(2n,1)$. The adjoint action of $U$ on $\mathfrak{so}(2n,1)$ is given by
\begin{equation*}
 \operatorname{Ad}_U  \begin{pmatrix}
  0 & \mathbf{u}^t \\ \mathbf{u} & A 
 \end{pmatrix}  =
 \begin{pmatrix}
  0 & (U\mathbf{u})^t \\ U \mathbf{u} & UAU^{-1} 
 \end{pmatrix} 
\end{equation*}
from which the equivariant splitting $\mathfrak{so}(2n,1) \cong \mathfrak{so}(2n) \oplus \bC^n$ can be inferred. Those elements in $\mathfrak{so}(2n)$ commuting with $J_0$ constitute $\mathfrak{u}(n)$ as a subset of $\mathfrak{so}(2n)$. The $\mathrm{U}(n)$-invariant complement of $\mathfrak{u}(n)$ in $\mathfrak{so}(2n)$ can be naturally identified with $\Lambda^2(\bC^n)^*$ giving the desired decomposition.   
\end{proof} 

Viewing $\mathcal{O}(\xi_0)$ as $\mathrm{SO}(2n,1)/\mathrm{U}(n)$, the lemma implies that 
\begin{equation*}
 T_{\xi_0} \mathcal{O}(\xi_0) \cong T_{E\cdot \mathrm{U}(n)}( \mathrm{SO}(2n,1) / \mathrm{U}(n) )\cong T_E \mathrm{SO}(2n,1) / T_E \mathrm{U}(n) \cong \Lambda^2 (\bC^n)^* \oplus \bC^n \, ,
\end{equation*}
from which it becomes apparent that the (real) dimension of $\mathcal{O}(\xi_0)$ is $n(n+1)$. 
Since $\mathcal{O}(\xi_0)$ is a homogenous space, the same description is valid for the tangent spaces at other points as well. However, the almost complex structure $J$ determining the identification $\bC^n \cong (\bR^{2n},J)$ will depend on the chosen point. The following consideration makes this clearer. \par
Observe that the different points in the adjoint orbit of $\xi_0$ under $\mathrm{SO}(2n) \subset \mathrm{SO}(2n,1)$ are of the form 
\begin{equation*}
 \xi = \begin{pmatrix} 0 &0 \\ 0& J \end{pmatrix} \, ,
\end{equation*}  
where $J = A J_0 A^{-1}$ with $A\in \mathrm{SO}(2n)$. The stabiliser of the $\mathrm{SO}(2n)$-action is again $\mathrm{U}(n)$, so the orbit is given by $\mathrm{SO}(2n)/ \mathrm{U}(n)$, which amounts to all possible choices of orientation preserving almost complex structures compatible with the given inner product on $\bR^{2n}$. \par
From a slightly different point of view this can be formulated as follows: The inclusion $\mathrm{U}(n) \to \mathrm{SO}(2n)$ induces a fibre map $\pi \, : \,\mathcal{O}(\xi) \cong \mathrm{SO}(2n,1)/\mathrm{U}(n) \to \mathrm{SO}(2n,1)/\mathrm{SO}(2n) \cong H^{2n}$ with fibre isomorphic to $\mathrm{SO}(2n)/\mathrm{U}(n)$; the adjoint orbit $\mathcal{O}(\xi_0)$ fibres over hyperbolic space with the fibre over a point $x \in H^{2n}$ consisting of all almost complex structures compatible with the hyperbolic metric on $H^{2n}$ at $x$. This gives the identification $Z_{2n} \cong \mathcal{O}(\xi_0)$. From here on, $Z_{2n}$ will be used to denote the adjoint orbit $\mathcal{O}(\xi_0)$ , the corresponding coadjoint orbit, the homogenous space $\mathrm{SO}(2n,1)/\mathrm{U}(n)$ and the total space of the twistor fibration $\pi \, : \, Z_{2n} \to H^{2n}$.    \par
If $(x,J) \in Z_{2n}$ with $x \in H^{2n}$ and $J$ in the fibre over $x$, the tangent space at $(x,J)$ is
\begin{equation*}
 T_{(x,J)}Z_{2n} \cong \Lambda^2 (\bC^n)^* \oplus (\bC^n) \, ,  \,\bC^n = (\bR^{2n},J) \,.
\end{equation*}
We endow $Z_{2n}$ with an almost complex structure $I$ by demanding this identification to be $\bC$-linear. The resulting almost complex structure is the ``Eells--Salamon" structure of the twistor space $Z_{2n} \to H^{2n}$. (cf. \cite{EellsSalamon}). \par
As a coadjoint orbit, $Z_{2n}$ has already been endowed with $\mathrm{SO}(2n,1)$-invariant symplectic form $\omega$. It follows from homogeneous space description that $\mathrm{SO}(2n,1)$-invariant forms on $Z_{2n}$ are in one-to-one correspondence with $\mathrm{U}(n)$-invariant forms on $T_{\xi_0}Z_{2n} \cong \Lambda^{2}(\bC^n)^* \oplus \bC^n$. In the following, we will show that the space of closed $\mathrm{U}(n)$-invariant real 2-forms on $\Lambda^{2}(\bC^n)^* \oplus \bC^n$ is one-dimensional. \par

\begin{lemma} 
For $n>1$ the space of $\mathrm{U}(n)$-invariant real 2-forms on $\bC^n \oplus \, \Lambda^2 (\bC^n)^*$ is two-dimensional. Invariant forms are linear combinations of the standard hermitian forms on $P:= \bC^n$ and on $Q := \Lambda^2 (\bC^n)^*$. If $n=1$, $\mathrm{U}(n)$-invariant real 2-forms are multiples of the standard hermitian form on $P$.
\end{lemma}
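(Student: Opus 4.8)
The plan is to reduce the statement to a dimension count in representation theory by complexifying, and then to exhibit the two named forms explicitly. Write $V = P \oplus Q$ with $P = \bC^n$ the standard $\mathrm{U}(n)$-representation and $Q = \Lambda^2(\bC^n)^*$. Real $2$-forms on $V$ are the elements of the real exterior power $\Lambda^2 V^*$, and since passing to $\mathrm{U}(n)$-invariants commutes with complexification we have
\[
 \dim_\bR \big(\Lambda^2 V^*\big)^{\mathrm{U}(n)} = \dim_\bC \big(\Lambda^2 V_\bC^*\big)^{\mathrm{U}(n)}, \qquad V_\bC := V \otimes_\bR \bC .
\]
First I would use the complex structure $J$ to split $V_\bC = V^{1,0} \oplus V^{0,1}$ with $V^{1,0} \cong P \oplus Q$ and $V^{0,1} \cong \ol P \oplus \ol Q$ as $\mathrm{U}(n)$-representations; dualising and using $W^* \cong \ol W$ for representations of the compact group $\mathrm{U}(n)$ gives $V_\bC^* \cong P \oplus \ol P \oplus Q \oplus \ol Q$.

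The next step is to expand $\Lambda^2 V_\bC^*$ into the summands $\Lambda^2 X$ and $X \otimes Y$ built from the four factors and to detect the trivial subrepresentations. The efficient tool is the central circle $\{e^{i\theta}\1\} \subset \mathrm{U}(n)$, which acts on $P, \ol P, Q, \ol Q$ with weights $+1, -1, -2, +2$ respectively. A summand can contain a trivial representation only if its total central weight vanishes; this rules out all four pure pieces $\Lambda^2 P, \Lambda^2 \ol P, \Lambda^2 Q, \Lambda^2 \ol Q$ (each of nonzero central weight) and every mixed piece except the two balanced ones, $P \otimes \ol P$ and $Q \otimes \ol Q$. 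For $n>1$ both $P$ and $Q$ are irreducible $\mathrm{U}(n)$-representations, so Schur's lemma yields $\dim_\bC (P \otimes \ol P)^{\mathrm{U}(n)} = \dim_\bC \operatorname{End}_{\mathrm{U}(n)}(P) = 1$ and likewise $\dim_\bC (Q \otimes \ol Q)^{\mathrm{U}(n)} = 1$; hence the invariant space is $2$-dimensional. When $n=1$ one has $Q = \Lambda^2(\bC)^* = 0$, so the $Q$-summands vanish and only $P \otimes \ol P$ survives, giving a $1$-dimensional space.

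Finally I would identify the invariants with the forms in the statement. By the same Schur computation each of $P$ and $Q$ carries a unique $\mathrm{U}(n)$-invariant hermitian metric up to scale, and the imaginary part of such a metric is an invariant real $2$-form; these are the standard hermitian forms $\omega_P$ and $\omega_Q$. They are linearly independent, since $\omega_P$ lives on $\Lambda^2 P^*$ and $\omega_Q$ on $\Lambda^2 Q^*$, so by the dimension count they span the whole invariant space for $n>1$ (and $\omega_P$ alone spans it for $n=1$). I expect the only delicate point to be the bookkeeping in the central-weight argument: one must check that every cross term is genuinely unbalanced and that the two balanced terms each contribute exactly one invariant, but once the irreducibility of $P$ and $Q$ is in hand this is immediate from Schur's lemma.
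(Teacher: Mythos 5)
Your proof is correct, and its engine is the same as the paper's: Schur's lemma applied to the splitting of the tangent space into $P$ and $Q$. The difference is in execution. The paper works directly with the real bilinear form, writes it in $2\times 2$ block shape, kills the off-diagonal blocks because $P$ and $Q$ are irreducible and inequivalent, and then asserts that each diagonal block is a multiple of the standard hermitian form. You instead complexify, decompose $V_\bC^*\cong P\oplus\ol P\oplus Q\oplus\ol Q$, and use the central circle's weights $(+1,-1,-2,+2)$ to discard every summand of $\Lambda^2 V_\bC^*$ except $P\otimes\ol P$ and $Q\otimes\ol Q$, each of which contributes exactly one invariant by Schur. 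What this buys you is that the diagonal blocks are handled rigorously: the weight argument eliminates the a priori possible invariant real forms of type $(2,0)+(0,2)$ on $P$ and on $Q$ (the $\Lambda^2 P$, $\Lambda^2\ol P$, $\Lambda^2 Q$, $\Lambda^2\ol Q$ pieces), a step the paper's phrase ``$\Omega_{P\times P}=\lambda_1\Omega_1$'' leaves implicit. The cost is the extra bookkeeping of complexification, but all your weight computations and the $n=1$ degeneration check out.
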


\begin{proof}
Let $\Omega$ be an real invariant 2-form on $P \oplus Q$. In the obvious notation $\Omega$ can be written as
\begin{equation*}
 \Omega = \left( \begin{array}{c|c} \Omega_{P\times P} & \Omega_{Q\times P} \\ \hline
 \Omega_{P\times Q} &\Omega_{Q\times Q} \end{array}\right) \,.
\end{equation*} 
As $\mathrm{U}(n)$ is compact, $P$ and $Q$ are equivalent to their dual representations. Furthermore, if $n>1$, $P$ and $Q$ are irreducible and inequivalent, so by Schur's lemma we have $\Omega_{Q\times P} = 0$, $\Omega_{P\times Q} = 0$ and $\Omega_{P\times P} = \lambda_1 \Omega_1$ and $\Omega_{Q\times Q}=\lambda_2 \Omega_2$, where $\Omega_1$ and $\Omega_2$ are the standard hermitian forms  on  $P$ and $Q$ respectively. Since $\Omega$ is real, so are $\lambda_1, \lambda_2$.   \par
If $n=1$, then $Q=0$ and a degenerate version of the above argument shows that $\Omega = \lambda_1 \Omega_1$ for $\lambda_1 \in \bR$. 
\end{proof}

\begin{corollary}
The ``Eells-Salamon" almost complex structure $I$ on $Z_{2n}$ is compatible with the symplectic structure $\omega$. 
\end{corollary}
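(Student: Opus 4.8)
The plan is to reduce compatibility to the single point $\xi_0$ and read it off from the Killing form. Since both $\omega$ and $I$ are $\mathrm{SO}(2n,1)$-invariant, it suffices to check at $\xi_0$ that $\omega(IX,IY)=\omega(X,Y)$ and that $g(X,Y):=\omega(X,IY)$ is positive definite on $T_{\xi_0}Z_{2n}\cong P\oplus Q$, where $P=\bC^n$ and $Q=\Lambda^2(\bC^n)^*$ carry their natural complex structures (Lemma~\ref{Lem:Decomp}) and $I$ is declared $\bC$-linear for this splitting. By the preceding lemma the invariant form $\omega$ is a combination $\lambda_1\Omega_1+\lambda_2\Omega_2$ of the standard hermitian forms on $P$ and $Q$ (only $\lambda_1\Omega_1$ when $n=1$). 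Each $\Omega_i$ is of type $(1,1)$ for $I$, so $\omega(IX,IY)=\omega(X,Y)$ holds automatically and $\omega$ carries no $P$--$Q$ cross terms; the whole content is therefore the positivity of the coefficients $\lambda_1,\lambda_2$ relative to the positively oriented $\Omega_i$.

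To pin down the signs I would compute $\omega$ from the Kirillov--Kostant--Souriau formula $\omega_{\xi_0}(X,Y)=B(\xi_0,[X,Y])=B(\ad_{\xi_0}X,Y)$, with $B$ the Killing form. A short bracket computation in $\mathfrak{so}(2n,1)$ gives $\ad_{\xi_0}=J_0$ on the horizontal part $P$ (the $\mathbf{u}$-directions) and $\ad_{\xi_0}=2J_0$ on the vertical part $Q\subset\mathfrak{so}(2n)$ (the matrices anticommuting with $J_0$). On the other hand $B$ is proportional to $\tr(XY)$, and $\tr(\xi^2)=2|\mathbf{u}|^2-|A|^2$ for $\xi=(\mathbf{u},A)$; hence $B$ is positive definite on $P$ and negative definite on $Q$, reflecting the Cartan decomposition $\mathfrak{so}(2n,1)=\mathfrak{k}\oplus\mathfrak{p}$ with $P\subset\mathfrak{p}$ and $Q\subset\mathfrak{k}$.

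Combining these, on $P$ one gets $g(X,X)=B(J_0X,IX)$, which is positive exactly when $I=J_0$ there, while on $Q$ one gets $g(A,A)=2B(J_0A,IA)$, which---because $B$ is now negative definite---is positive exactly when $I=-J_0$. These two sign choices are precisely the Eells--Salamon structure: the natural complex structure on the horizontal $\bC^n$ together with its reversal on the vertical fibre. Thus both $\lambda_i>0$ and $g$ is positive definite, so $I$ is compatible with $\omega$; for $n=1$ the vertical factor is absent and the horizontal computation alone finishes the argument. I expect the main obstacle to be exactly this sign bookkeeping: verifying that the intrinsic complex structure of $\Lambda^2(\bC^n)^*$ corresponds to $A\mapsto-J_0A$ (the characteristic vertical reversal of the Eells--Salamon structure, as opposed to the Atiyah--Hitchin--Singer one) and matching the Killing-form signature against the $\ad_{\xi_0}$-eigenvalues, so that both coefficients emerge with the same positive sign.
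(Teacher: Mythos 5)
Your reduction to the point $\xi_0$ via $\mathrm{SO}(2n,1)$-invariance and the identification of $\omega$ as a combination $\lambda_1\Omega_1+\lambda_2\Omega_2$ of the standard hermitian forms on $P=\bC^n$ and $Q=\Lambda^2(\bC^n)^*$ is exactly the paper's route, and your argument is correct. But the paper's proof stops there: it simply asserts that the standard complex structure is compatible with the standard hermitian forms and hence with any linear combination of them, which only establishes $\omega(I\cdot,I\cdot)=\omega$ and silently skips the positivity of $\lambda_1$ and $\lambda_2$ (a combination with a negative coefficient would be $I$-invariant but give an indefinite metric). Your Kirillov--Kostant--Souriau computation supplies precisely this missing step, and your intermediate claims check out: $\ad_{\xi_0}$ is $J_0$ on the $\mathbf{u}$-directions and $2J_0$ (left multiplication) on the $J_0$-anticommuting part of $\mathfrak{so}(2n)$, while $\tr(\xi^2)=2|\mathbf{u}|^2-|A|^2$ gives the stated signature of the Killing form, so positivity of $g=\omega(\cdot,I\cdot)$ on the fibre directions forces $I=-J_0$ there. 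This is not just bookkeeping --- it is the reason the Eells--Salamon structure (fibrewise reversal) is compatible while the Atiyah--Hitchin--Singer one is not, a distinction the paper's one-line proof cannot see. The one loose end you flag, namely that the intrinsic complex structure of $\Lambda^2(\bC^n)^*$ realized inside $\mathfrak{so}(2n)$ is $A\mapsto -J_0A$, does hold (for a $(2,0)$-form $\beta(u,v)=\langle Au,v\rangle$ one has $\Re(i\beta)(u,v)=\langle AJ_0u,v\rangle=-\langle J_0Au,v\rangle$), so your argument closes. In short: same skeleton as the paper, but your version proves strictly more and is the one I would keep.
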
 

\begin{remark}
If $n=2$, this is a special case of Theorem 4.4 in \cite{FinePanov2}.
\end{remark}

\begin{proof}
At $\xi_0$, $I$ is given by the $\mathrm{U}(n)$-invariant standard complex structure of  $\Lambda^2(\bC^n)^* \oplus \bC^n$, which is compatible with the standard hermitian forms on $\Lambda^2(\bC^n)^*$ and $\bC^n$. As $\omega$ is a linear combination of the latter two, $I$ and $\omega$ are compatible at $\xi_0$. By $\mathrm{SO}(2n,1)$-invariance of $\omega$ and $I$, the compatibility is global.  
\end{proof} \par
  					
\begin{proposition}  \label{Prop:1d}
The space of closed $\mathrm{SO}(2n,1)$-invariant real 2-forms on $Z_{2n}$ is one-dimensional consisting of real multiples of the standard symplectic form $\omega$ on the adjoint orbit $Z_{2n}$. 
 \end{proposition}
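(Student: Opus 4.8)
The plan is to combine the preceding lemma with an explicit exterior-derivative computation on the reductive model of $Z_{2n}$. Under the correspondence between $\mathrm{SO}(2n,1)$-invariant forms on $Z_{2n}$ and $\mathrm{U}(n)$-invariant elements of $\Lambda^2\mathfrak m^*$, where $\mathfrak{so}(2n,1)=\mathfrak u(n)\oplus\mathfrak m$ and $\mathfrak m = P\oplus Q$, the invariant $2$-forms are precisely $\Omega=\lambda_1\Omega_1+\lambda_2\Omega_2$ with $\Omega_1,\Omega_2$ the hermitian forms on $P=\bC^n$ and $Q=\Lambda^2(\bC^n)^*$ (for $n=1$ the lemma already gives a one-dimensional space and the claim is immediate, so take $n>1$). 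Since the standard symplectic form $\omega$ is closed and nonzero, the space of closed invariant forms is at least one-dimensional; I need to show that $d\Omega=0$ imposes exactly one nontrivial linear relation on $(\lambda_1,\lambda_2)$.

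First I would record the bracket structure of $\mathfrak{so}(2n,1)$ relative to this splitting, which a direct block-matrix computation yields: $[P,P]\subset\mathfrak{so}(2n)$, so that $[P,P]_{\mathfrak m}\subset Q$; $[Q,P]\subset P$; and, decisively, $[Q,Q]\subset\mathfrak u(n)$, whence $[Q,Q]_{\mathfrak m}=0$. This last inclusion is the statement that the twistor fibre $\mathrm{SO}(2n)/\mathrm{U}(n)$ is a symmetric space: if $A_1,A_2$ anticommute with $J_0$ then $[A_1,A_2]$ commutes with $J_0$.

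Next I would apply the Chevalley--Eilenberg formula for invariant forms,
\[ d\Omega(X,Y,Z) = -\Omega([X,Y]_{\mathfrak m},Z) - \Omega([Y,Z]_{\mathfrak m},X) - \Omega([Z,X]_{\mathfrak m},Y), \]
sorting the check by how many arguments lie in $P$ versus $Q$. Because $\Omega$ pairs $P$ with $P$ and $Q$ with $Q$ and kills the mixed block, the bracket grading makes $d\Omega$ vanish identically on the types $(P,P,P)$, $(P,Q,Q)$ and $(Q,Q,Q)$ --- in the latter two one uses $[Q,Q]_{\mathfrak m}=0$. The sole surviving type is $(P,P,Q)$, where for $X,Y\in P$ and $Z\in Q$ one finds
\[ d\Omega(X,Y,Z) = \lambda_1\bigl[\Omega_1([X,Z],Y)-\Omega_1([Y,Z],X)\bigr] - \lambda_2\,\Omega_2([X,Y]_Q,Z) =: \lambda_1 T_1 - \lambda_2 T_2. \]

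Finally I would observe that $T_1$ and $T_2$ are both nonzero trilinear forms --- $T_2\ne0$ because $[P,P]_Q$ is the nonvanishing curvature (O'Neill) term of the fibration for $n>1$, and $T_1\ne0$ because $[Q,P]\subset P$ is nonzero and $\Omega_1$ is nondegenerate. The existence of the nonzero closed form $\omega=\lambda_1^0\Omega_1+\lambda_2^0\Omega_2$ then forces both $\lambda_i^0\ne0$ and $T_1,T_2$ to be proportional, so that the equation $\lambda_1 T_1=\lambda_2 T_2$ cuts out exactly the line through $(\lambda_1^0,\lambda_2^0)$ and the space of closed invariant forms is one-dimensional. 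The main obstacle is the bracket bookkeeping --- above all verifying $[Q,Q]\subset\mathfrak u(n)$ and that $T_1,T_2$ do not vanish --- since these are what guarantee that closedness is a codimension-one condition rather than vacuous or over-determined. One can sidestep the computation of $T_1,T_2$ altogether through the moment-map viewpoint: on a coadjoint orbit $G/G_\xi$ of a semisimple group the invariant closed $2$-forms are parametrised by $\eta\mapsto\bigl((X,Y)\mapsto B(\eta,[X,Y])\bigr)$ with $\eta$ in the centre $\mathfrak z(\mathfrak g_\xi)$, and here $\mathfrak g_\xi=\mathfrak u(n)$ has one-dimensional centre $\bR J_0$, giving the result at once.
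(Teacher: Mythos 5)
Your proposal is correct, but your primary route differs genuinely from the paper's. The paper proves the proposition by the moment-map argument you relegate to your final sentence: assuming a second invariant symplectic form $\omega'$ not proportional to $\omega$, it invokes the existence (for a semisimple group acting on the simply connected $Z_{2n}$) of an equivariant moment map $\mu'$ whose value $\xi_0'=\mu'(\xi_0)$ is fixed by the isotropy group $\mathrm{U}(n)$; since Lemma~\ref{Lem:Decomp} shows the $\mathrm{U}(n)$-fixed subspace of $\mathfrak{so}(2n,1)$ is the line $\bR\,iI\subset\mathfrak{u}(n)$, the points $\xi_0,\xi_0'$ cannot be independent, a contradiction. (Your closing parenthetical --- parametrising closed invariant $2$-forms by $\eta\mapsto B(\eta,[\cdot,\cdot])$ via Whitehead's lemma --- is essentially a streamlined version of this; note only that the relevant space is the centraliser of $\mathfrak{u}(n)$ in $\mathfrak{so}(2n,1)$ rather than the centre of $\mathfrak{u}(n)$, though Lemma~\ref{Lem:Decomp} shows these coincide here.) Your main argument instead verifies closedness directly via the Chevalley--Eilenberg formula on the reductive splitting, using the bracket grading $[P,P]_{\mathfrak m}\subset Q$, $[Q,P]\subset P$, $[Q,Q]\subset\mathfrak{u}(n)$; this is elementary and self-contained, treats degenerate closed forms on the same footing as symplectic ones (the paper's argument strictly speaking addresses symplectic $\omega'$ and then infers the general closed case), and exhibits the explicit codimension-one condition $\lambda_1T_1=\lambda_2T_2$. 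The price is exactly the bookkeeping you flag: one must actually check $T_1,T_2\neq0$, which does go through --- for instance $T_1(X,Y,Z)=-2\langle J_0ZX,Y\rangle$ after using that $Z\in Q$ is antisymmetric and anticommutes with $J_0$, and $[P,P]$ spans $\mathfrak{so}(2n)$ so its projection to $Q$ is onto --- but is left as a sketch in your write-up. Both approaches are valid; the paper's is shorter at the cost of importing moment-map machinery.
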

 
\begin{proof} 
Standard theory endows the adjoint orbit $Z_{2n} = \mathcal{O}(\xi_0)$ with a $\mathrm{SO}(2n,1)$-invariant symplectic form $\omega$ and a moment map $\mu \, : \, Z_{2n} \to \mathfrak{so}(2n,1)$ which is the inclusion of the adjoint orbit.  Let $\omega'$ be another $\mathrm{SO}(2n,1)$-invariant symplectic form on $Z_{2n}$ and assume is not a real multiple of $\omega$. As $\mathrm{SO}(2n,1)$ is semi-simple, its symplectic action on the simply connected space $(Z_{2n},\omega')$ admits a moment map $\mu' \, : \, Z_n \to \mathfrak{so}(2n,1)$ whose image is the adjoint orbit of $\xi_0':= \mu'(\xi_0)$. The elements $\xi_0$ and $\xi_0'$ are linearly independent in $\mathfrak{so}(2n,1)$ for $\xi_0' = \lambda \xi_0$ would imply $\mu' = \lambda \mu$ and hence $\omega' = \lambda \omega$. Their span is a two-dimensional subspace of $\mathfrak{so}(2n,1)$ on which the isotropy group $\mathrm{U}(n)$ of $\xi_0$ acts trivially, but the space of all elements in $\mathfrak{so}(2n,1)$ on which $\mathrm{U}(n)$ acts trivially is one-dimensional, consisting of imaginary multiples of the identity matrix in $\mathfrak{u}(n) \subset \mathfrak{so}(2n,1)$ (cf. Lemma~\ref{Lem:Decomp}). This is a contradiction, so $\omega'$ is a real multiple of $\omega$. \par
In terms of $\mathrm{U}(n)$-invariant real 2-forms $\Omega = \lambda_1 \Omega_1 + \lambda_2 \Omega_2$ on $\bC^n \oplus \Lambda^2(\bC^n)^*$ for $n>1$, this means that closedness imposes a fixed ratio between $\lambda_1$ and $\lambda_2$. In particular, neither $\Omega_1$ nor $\Omega_2$ can be closed and the only closed invariant real 2-forms are real multiples of $\omega$. The case $n=1$ is trivial. 
\end{proof}

\subsection{Symplectic Curvature Flow on $(Z_{2n},\omega)$} 
  
In order to run SCF on $Z_{2n}$ with the $\mathrm{SO}(2n,1)$-invariant almost K\"ahler structure $(\omega,I)$ serving as initial data, the Chern--Ricci curvature and the $I$-anti-invariant part of the Riemann--Ricci tensor need to be determined. \par

The Riemann--Ricci tensor $\Ric$ is determined by a $\mathrm{U}(n)$-invariant metric, so it is itself invariant. This is enough to see that $\Ric$ has to be $I$-invariant: 
If multiplication by $i$ on $\Lambda^2 (\bC^n)^* \oplus \bC^n$ were represented by an element in $\mathrm{U}(n)$ this would be immediate. This is not the case, but there is an easy work-around. Set $z^{ij} := z^i \wedge z^j$ and consider the basis $(z^{ij}, z_k)$ of $\Lambda^2 (\bC^n)^* \oplus \bC^n$. Since $\Ric$ is symmetric bilinear, it is determined by its values on $((z^{ij},0),(z^{i'j'},0))$, $((0,z_k),(0,z_{k'}))$ and $((z^{ij},0),(0,z_k))$. 
For each of these pairs of arguments there exists an element $\operatorname{diag}(e^{i\lambda_1},...,e^{i\lambda_n}) \in \bT^n \subset \mathrm{U}(n)$  acting by multiplication by $i$ on both, so $\Ric$ has to be $I$-invariant. Consequently, $\mathcal{R} = [Rc,I] = 0$. \par

The Chern--Ricci tensor $P$ is a closed $\mathrm{SO}(2n,1)$-invariant 2-form, so by Proposition~\ref{Prop:1d}, $P$ is a multiple of $\omega$. In \cite{FinePanov1} J. Fine and D. Panov  determined the first Chern class of $Z_{2n}$: $c_1(Z_{2n})  =  (n-2) [\omega]$. As $(1/4\pi)P$ represents the first Chern class ($P$ is $2i$ times the curvature of the Chern connection on the anti-canonical bundle, $i/2\pi$ times which represents the first Chern class), we have $P = (n-2)\pi/4  \cdot \omega$. In particular, $P$ has no $(2,0)$ and $(0,2)$ parts. \par

With this result, SCF for $(Z_{2n},\omega)$ becomes 
\begin{equation*}
 \partial_t \omega(t) = \frac{\pi}{2}(2-n) \cdot \omega(0) \, , \qquad \partial_t I = 0 \,.
\end{equation*}
It is manifest that SCF collapses $(Z_{2n},\omega)$ in finite time if $n>2$, expands it if $n=1$ and leaves the almost K\"ahler structure unchanged if $n=2$.

\subsection{Non-K\"ahler Quotients} 

The symplectic form $\omega$ and the almost complex structure $I$ on $Z_{2n}$ are $\mathrm{SO}(2n,1)$-invariant, so the almost K\"ahler structure will descent to quotients of $Z_{2n}$ by subgroups $\Gamma \subset \mathrm{SO}(2n,1)$. In the adjoint orbit description of $Z_{2n}$, $\Gamma$ acts by conjugation. Viewing $Z_{2n} \cong \mathrm{SO}(2n,1)/\mathrm{U}(n)$ ($\mathrm{U}(n)$ acting from the right), this corresponds to $\Gamma$ acting by left multiplication, so the actions of $\mathrm{U}(n)$ and $\Gamma$ on $\mathrm{SO}(2n,1)$ commute. \par

If one chooses $\Gamma \subset \mathrm{SO}(2n,1)$ to be the fundamental group of a compact hyperbolic manifold $M$ of dimension $2n$, one obtains two quotients: $\Gamma \backslash H^{2n} \cong \Gamma \backslash \mathrm{SO}(2n,1) / \mathrm{SO}(2n) =: M$ and
 $\Gamma \backslash Z_{2n} \cong \Gamma \backslash \mathrm{SO}(2n,1) / \mathrm{U}(2n)$. The action of $\Gamma$ on $Z_{2n}$ and $H^{2n}$ commutes with the projection $\pi  :  Z_{2n} \to H^{2n}$, so $\Gamma \backslash Z_{2n}$ fibres over $M$ with fibre $\mathrm{SO}(2n)/\mathrm{U}(n)$. This shows that $\Gamma \backslash Z_{2n}$ is a fibre bundle with compact base and fibre, so it is itself compact. Furthermore, the fibre $\mathrm{SO}(2n)/\mathrm{U}(n)$ is connected and simply connected, so $\Gamma \backslash Z_{2n}$ and $M$ have isomorphic fundamental groups  $\pi_1(\Gamma \backslash Z_{2n}) \cong \pi_1(M) \cong \Gamma$, but compact K\"ahler manifolds cannot have fundamental group isomorphic to that of a compact hyperbolic manifold in dimension greater than $2$ (see e.g. \cite{Toledo}). Hence, $\Gamma \backslash Z_{2n}$ cannot be K\"ahler if $n>1$.

\section{SCF on left-invariant structures on some four- and six-dimensional nilmanifolds \label{Sec:SCFLeftInvar}}

In the case of left-invariant almost K\"ahler structures on a nilpotent Lie group, SCF reduces to an ODE on the corresponding nilpotent Lie algebra. Moreover, if the structure coefficients of a connected, simply connected Lie group's Lie algebra can be chosen rational, the Lie group admits cocompact lattices (Theorem 7 in \cite{Malcev}). As non-abelian nilpotent Lie algebras are never formal (cf. \cite{Hasegawa}), taking quotients by such lattices results in compact non-K\"ahler manifolds on which we can hope to explicitly solve SCF. \par

This section presents such explicit solutions for SCF on three different nilalgebras. For the computations involved, the expression for the Chern--Ricci form provided in the following lemma is useful.

\begin{lemma} \label{Lem:A}
Let $(M,g,J,\omega)$ be an almost K\"ahler manifold. Denote by $A$ the connection 1-form of the Levi--Civita connection in a local complex frame (a local frame in which $J$ is constant). In that frame the Chern--Ricci form has the following expression:
\begin{equation*}
 P =  d \tr (AJ) \,.
\end{equation*}
\end{lemma}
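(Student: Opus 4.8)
The plan is to express the Chern--Ricci form as the exterior derivative of (a constant multiple of) the trace of the Chern connection form on $T^{1,0}M$, and then to match that against $\tr(AJ)$ by passing to a complexified frame. First I would recall that the almost anti-canonical bundle $\Lambda^{n,0}(TM)=\Lambda^n T^{1,0}M$ is a \emph{line} bundle, so the Chern connection it inherits from $T^{1,0}M$ has connection $1$-form $\tr\theta$, where $\theta$ is the matrix-valued Chern connection form on $T^{1,0}M$ in the given frame. For a line bundle the quadratic term of the curvature drops out, since $\tr(\theta\wedge\theta)=0$ by antisymmetry of the wedge, so the curvature is simply $d\tr\theta$ and by definition
\[
 P = 2i\, d\tr\theta = d\bigl(2i\,\tr\theta\bigr).
\]
It therefore suffices to show that $2i\,\tr\theta$ and $\tr(AJ)$ differ by a closed $1$-form.

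Next I would complexify. Because $J$ is constant in the chosen frame, a constant change of basis turns $(e_i)$ into a frame $(\epsilon_a,\ol\epsilon_a)$ adapted to $T^{1,0}M\oplus T^{0,1}M$, in which $J=\operatorname{diag}(iI_n,-iI_n)$; as the change of basis is constant, both $\tr(AJ)$ and the exterior derivative are unaffected. Writing the complexified Levi--Civita form in blocks $\wt A=\left(\begin{smallmatrix}\alpha&\beta\\\gamma&\delta\end{smallmatrix}\right)$, a direct multiplication yields $\tr(AJ)=\tr(\wt A J)=i(\tr\alpha-\tr\delta)$, and reality of the Levi--Civita connection forces $\delta=\ol\alpha$, hence $\tr\delta=\ol{\tr\alpha}$ and $\tr(AJ)=i(\tr\alpha-\ol{\tr\alpha})$. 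The crucial identification is that the Chern connection on $T^{1,0}M$ is the $T^{1,0}$-projection of the Levi--Civita connection: for $Y\in T^{1,0}M$ one checks from $\nabla^{Ch}=\nabla^{LC}-\tfrac12 J(\nabla^{LC}J)$ that $\nabla^{Ch}_X Y=\pi^{1,0}\nabla^{LC}_X Y$, so that $\theta=\alpha$ and $\tr\theta=\tr\alpha$.

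Finally I would combine these computations: $2i\,\tr\theta-\tr(AJ)=2i\,\tr\alpha-i(\tr\alpha-\ol{\tr\alpha})=i(\tr\alpha+\ol{\tr\alpha})=2i\,\Re\tr\alpha$. The form $\Re\tr\alpha$ is closed, since in a \emph{unitary} complex frame $\tr\alpha$ is purely imaginary, while any two complex frames are related by a $\mathrm{GL}(n,\bC)$-valued function $h$ under which $\tr\alpha$ changes by $d\log\det h$; thus $\Re\tr\alpha$ changes only by the closed form $d\log|\det h|$ and is locally exact. Hence $d\bigl(2i\,\tr\theta-\tr(AJ)\bigr)=0$, and combined with the first display this gives $P=d\tr(AJ)$.

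I expect the main obstacle to be the identification in the second paragraph --- verifying that the connection induced on the anti-canonical bundle by the Chern connection is genuinely the one induced by the projected Levi--Civita connection. This is precisely where the definition of the Chern connection (Hermitian, with torsion of type $(2,0)+(0,2)$) and the almost K\"ahler hypothesis $d\omega=0$ enter, and it must be pinned down carefully; once it is in hand, the remaining trace-and-reality bookkeeping is routine.
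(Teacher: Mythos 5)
Your argument is correct, but it takes a genuinely different route from the paper's. The paper stays entirely on the real tangent bundle: writing $C=\tfrac12(A-JAJ)$ for the Chern connection form, it obtains $P=\tr(JF)$ with $F=dC+C\wedge C$ from the index definition $P_{kl}=\omega^{ij}F_{ijkl}$, kills the quadratic term via the identity $\tr(J[C_u,C_v])=0$ (using $[J,C]=0$ and cyclicity of the trace), and then pulls $d$ through the trace using $dJ=0$ to get $P=\tr(J\,dC)=d\tr(JC)=d\tr(AJ)$. You instead descend to the anti-canonical line bundle $\Lambda^nT^{1,0}M$, identify the Chern connection on $T^{1,0}M$ as the $(1,0)$-projection of the Levi--Civita connection --- which does follow from $\nabla = D-\tfrac12 J(DJ)$, exactly as you anticipate, since $(D_XJ)Y=(i-J)D_XY$ for $JY=iY$ gives $\nabla_XY=\tfrac12(1-iJ)D_XY$ --- and reduce the claim to the closedness of $\Re\tr\alpha$, which you correctly obtain from Hermiticity of the Chern connection (equivalently, from $d\tr\theta=\tr F_\theta$ being frame-independent and purely imaginary). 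Your route makes the $2i$ normalisation and the relation to $c_1$ transparent and never touches the quadratic curvature term; the paper's is more self-contained and purely algebraic, requiring no complexified frames or unitary gauge. The two starting normalisations agree for the same reason your difference term is closed: $\tr(JF)=-2\Im\tr F^{1,0}$ equals $2i\,\tr F^{1,0}$ exactly because $\Re\tr F^{1,0}=0$.
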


\begin{proof}
The Chern connection on an almost hermitian manifold $(M,g,J,\omega)$ is the unique connection $\nabla$ with respect to which $g$ and $J$ are parallel and whose torsion has vanishing $(1,1)$-part. In the almost K\"ahler case it is given by  $\nabla_X Y = D_X Y - \frac{1}{2}J(D_X J)Y$, where $D$ denotes the Levi--Civita connection. If $A$ and $C$ are the connection 1-forms of the Levi--Civita connection and the Chern connection in a local complex frame, then the formula for the Chern connection can be expressed as 
\begin{equation*}
 C = A - \frac{1}{2}J(DJ) = A - \frac{1}{2}J[A,J] = \frac{1}{2}(A - JAJ)
\end{equation*}

Denote by $F$ the full curvature tensor of $\nabla$ given by the endomorphism-valued 2-form $F = dC + C \wedge C$. The Chern--Ricci tensor is derived from $F$ via $P_{kl} = \omega^{ij}F_{ijkl}$, where $ij$ are the endomorphism indices ($i$ lowered via the metric) and $kl$ the form indices. Omitting the form indices, a brief calculation yields
\begin{equation*}
 P = \omega^{st}F_{st}= \omega_{it}F^{it} = g_{jt}J^j_i F^{it} = J^j_i F^i_j = \tr(JF) \; .
\end{equation*}

Application of the 2-form $C\wedge C$ to a pair of tangent vectors $u,v$ gives $(C \wedge C) (u,v) = [C_u,C_v]$. The fact that for two endomorphisms $A,B$ one has $\tr(AB) = \tr(BA)$ in conjunction with $[J,C]=0$ implies
\begin{equation*}
 (\tr J C\wedge C)(u,v) = \tr (J[C_u,C_v]) =  \tr (J C_u C_v ) - \tr(J C_v C_u) = 0  \,, 
\end{equation*}
i.e. $\tr (J C \wedge C)=0$. Since $dJ = 0$, the remaining contribution to the Chern--Ricci form is 
\begin{equation*}
 P = \tr (JdC) = d\tr (JC) = \frac{1}{2}d \, [\tr (JA) + \tr (AJ)] = d \tr (AJ)   \, 
\end{equation*}
as claimed.
\end{proof}

We want to apply this result to left-invariant almost K\"ahler structures on Lie groups, in which case left-invariant frames are complex frames. With the help of the next lemma, the Chern--Ricci form $P$ can be expressed directly in terms of the Lie algebra and the almost complex structure.  

\begin{lemma} \label{Lem:B}
Let $G$ be a Lie group and $(g,J,\omega)$ a left-invariant almost K\"ahler structure. If $A$ is the connection 1-form of the Levi--Civita connection in a left-invariant fame, then for any left-invariant vector field $Z \in \mathfrak{g}$ one has
\begin{equation*}
 \tr A_Z J = \frac{1}{2} \tr( \ad_Z \circ J + J \circ \ad_Z) + \tr \ad_{JZ} \, .
\end{equation*}
\end{lemma}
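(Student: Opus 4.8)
The plan is to evaluate the trace in a left-invariant \emph{orthonormal} frame $(e_i)$, which is legitimate since the trace is frame-independent and, crucially, since left-invariance of $g$ makes every $g(e_i,e_j)$ constant. Writing $A_Z$ for the endomorphism $Y\mapsto D_ZY$ of the Levi--Civita connection $D$ (so that, as in Lemma~\ref{Lem:A}, $D_XJ=[A_X,J]$ for the constant matrix $J$), I have
\[
 \tr(A_Z J) = \sum_i g\big(D_Z(Je_i),e_i\big).
\]
The first step is to apply the Koszul formula. Because all inner products of left-invariant fields are constant, the three function-derivative terms vanish and Koszul collapses to $2g(D_ZY,W)=g([Z,Y],W)-g([Y,W],Z)+g([W,Z],Y)$. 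Taking $Y=Je_i$, $W=e_i$ and summing over $i$ expresses $2\tr(A_ZJ)$ as a sum of three bracket terms.

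Two of these terms are handled directly and require nothing beyond the algebra of $\ad$ and the compatibility identity $g(X,JY)=-g(JX,Y)$ (a consequence of $g(J\cdot,J\cdot)=g(\cdot,\cdot)$). The term $\sum_i g([Z,Je_i],e_i)$ is literally $\tr(\ad_Z\circ J)$, since $[Z,Je_i]=(\ad_Z\circ J)e_i$. The term $\sum_i g([e_i,Z],Je_i)$ becomes, after writing $[e_i,Z]=-\ad_Ze_i$ and moving $J$ across with skew-symmetry, equal to $\tr(J\circ\ad_Z)$. Together these produce the $\tfrac12\tr(\ad_Z\circ J+J\circ\ad_Z)$ part of the claim, and neither uses closedness of $\omega$.

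The remaining term $-\sum_i g([Je_i,e_i],Z)$ is the crux, and the only place the almost K\"ahler hypothesis enters. Here I would pass from $g$ to $\omega$ through $g(X,Z)=\omega(X,JZ)$ (valid because $\omega(X,JZ)=g(JX,JZ)=g(X,Z)$), rewriting the sum as $\sum_i\omega([Je_i,e_i],JZ)$, and then invoke $d\omega=0$. For left-invariant forms the Chevalley--Eilenberg formula reads $d\omega(X,Y,W)=-\omega([X,Y],W)+\omega([X,W],Y)-\omega([Y,W],X)$; evaluating on the triples $(Je_i,e_i,JZ)$ and summing replaces $\sum_i\omega([Je_i,e_i],JZ)$ by $\sum_i\omega([Je_i,JZ],e_i)-\sum_i\omega([e_i,JZ],Je_i)$. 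Using compatibility once more, and reindexing by the (again orthonormal) frame $f_i:=Je_i$, each of these two sums collapses to a single copy of $\tr\ad_{JZ}$, so that $\sum_i g([Je_i,e_i],Z)=2\tr\ad_{JZ}$, feeding $\tr\ad_{JZ}$ into $\tr(A_ZJ)$ and completing the formula.

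The main obstacle is precisely this last step: it is where $d\omega=0$ is genuinely indispensable, and where the sign bookkeeping is most delicate, since it chains together two applications of $g(X,JY)=-g(JX,Y)$, the antisymmetry of the bracket, and the reindexing $e_i\mapsto Je_i$. I would pin down the overall sign of the $\tr\ad_{JZ}$ contribution by first verifying the identity on a small explicit example (for instance the left-invariant almost K\"ahler structure on $\mathfrak{aff}(\bR)$) before assembling the three pieces into the stated expression for $\tr A_Z J$.
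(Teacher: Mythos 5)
Your proposal follows essentially the same route as the paper's proof: Koszul formula in a left-invariant orthonormal frame, two terms read off directly as $\tr(\ad_Z\circ J)$ and $\tr(J\circ\ad_Z)$, and the third term $-\sum_i g([Je_i,e_i],Z)$ converted via $g(\cdot,Z)=\omega(\cdot,JZ)$, closedness of $\omega$, compatibility, and reindexing by $Je_i$ to yield $2\tr\ad_{JZ}$. The only blemish is the sign slip in your final line (the identity you need is $-\sum_i g([Je_i,e_i],Z)=2\tr\ad_{JZ}$, not $\sum_i g([Je_i,e_i],Z)=2\tr\ad_{JZ}$), which you already flag as the delicate point and which the explicit-example check you propose would catch.
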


\begin{proof}
Viewing the almost K\"ahler structure $(g,J, \omega)$ as algebraic data on the Lie algebra $\mathfrak{g}$ of $G$, the condition that the alternating bilinear form $\omega = g(J\cdot,\cdot)$ be closed means that $0=d\omega(X,Y,Z) = - \omega([X,Y],Z) + \omega([X,Z],Y) - \omega([Y,Z],X)$ for any $X,Y,Z \in \mathfrak{g}$. \par

Now let $(e_i)$ be an orthonormal left-invariant frame of $\mathfrak{g}$. Using the Kozul-formula 
\begin{equation*}
 2g(Y,A_{Z}X) = g([Z,X],Y) - g([Z,Y],X) - g([X,Y],Z) \,,
\end{equation*}
the desired result follows from a straightforward computation:
\begin{eqnarray*}
 2 \tr A_Z J &=& 2 \sum_i g(e_i, A_Z J e_i) \\
 &=& \sum_i g([Z,Je_i],e_i) - g([Z,e_i],Je_i) - g([Je_i,e_i],Z) \\
 &=& \sum_i g(\ad_Z \circ J (e_i),e_i) + g(J \circ \ad_Z (e_i), e_i) - g([Je_i,e_i],Z) \\
 &=& \tr (\ad_Z \circ J + J \circ \ad_Z) - \sum_i g([Je_i,e_i],Z) \,.
\end{eqnarray*}
By the closedness of $\omega$, the second term on the right hand side can be expressed as $2 \tr \ad_{JZ}$:
\begin{eqnarray*}
 - \sum_i g([Je_i,e_i],Z) &=& -\sum_i \omega([Je_i,e_i],JZ)  \\
 &=&  \sum_i \omega([Je_i, JZ], e_i) - \omega([e_i,JZ], Je_i)  \\
 &=& \sum_i g(J[Je_i,JZ], e_i) - g(J[e_i, JZ], Je_i) \\
 &=& \sum_i g( \ad_{JZ}J e_i, J e_i) + g(\ad_{JZ} e_i, e_i) \\
 &=& 2 \tr \ad_{J Z}  \,.
\end{eqnarray*} 
\end{proof}

Since for any left-invariant 1-form $\theta \in \mathfrak{g}^*$ and $X,Y \in \mathfrak{g}$ the relation $d \theta (X,Y) = -\theta([X,Y])$ holds, Lemmas~\ref{Lem:A} and \ref{Lem:B} combine to
\begin{equation*}
 P(X,Y) = (d\tr AJ)(X,Y) =  - \tr A_{[X,Y]}J = - \frac{1}{2}\tr (\ad_{[X,Y]} \circ J + J \circ \ad_{[X,Y]}) - \tr \ad_{J[X,Y]} .
\end{equation*}

This has a very useful consequence: 

\begin{proposition}[L. Vezzoni]\footnote[1]{This was brought to the author's attention by Luigi Vezzoni in a private conversation. His proof will be published in \emph{A note on canonical 
Ricci forms on 2-step nilmanifolds} in 
Proc. AMS.}  \label{Lem:ChernRicFlat}
All left-invariant almost K\"ahler structures on two-step nilpotent Lie groups are Chern--Ricci flat. 
\end{proposition}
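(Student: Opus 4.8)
The plan is to read off the vanishing of $P$ directly from the identity
\[
 P(X,Y) = - \tfrac{1}{2}\tr (\ad_{[X,Y]} \circ J + J \circ \ad_{[X,Y]}) - \tr \ad_{J[X,Y]}
\]
established just above, treating the two terms on the right separately and exploiting a different feature of the two-step nilpotent setting for each.

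First I would dispose of the second term. In any nilpotent Lie algebra $\mathfrak{g}$ the operator $\ad_Z$ is nilpotent for every $Z \in \mathfrak{g}$ (indeed $(\ad_Z)^s = 0$, where $s$ is the nilpotency step, since $\ad_{Z_1}\cdots\ad_{Z_s}$ maps $\mathfrak{g}$ into the vanishing term of the lower central series), and a nilpotent endomorphism is traceless. Taking $Z = J[X,Y]$ gives $\tr \ad_{J[X,Y]} = 0$. The point to note is that this uses only nilpotency of $\mathfrak{g}$, not the two-step hypothesis, and in particular does not require $J[X,Y]$ to lie in the centre, which in general it will not.

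Next I would kill the first term using the two-step hypothesis proper. By definition a two-step nilpotent Lie algebra satisfies $[\mathfrak{g},[\mathfrak{g},\mathfrak{g}]] = 0$, i.e. the commutator ideal $[\mathfrak{g},\mathfrak{g}]$ is central. Hence for all $X,Y \in \mathfrak{g}$ the bracket $[X,Y]$ is central, so $\ad_{[X,Y]} = 0$ and both summands $\ad_{[X,Y]} \circ J$ and $J \circ \ad_{[X,Y]}$ vanish identically.

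Combining the two observations, $P(X,Y) = 0$ for every $X,Y \in \mathfrak{g}$, whence $P \equiv 0$ and the structure is Chern--Ricci flat. I do not expect any genuine analytic difficulty here; the one subtlety to guard against is arguing the second term away by centrality, which fails precisely because $J$ need not preserve the centre, so the correct route for that term is tracelessness of nilpotent adjoint operators. It is worth recording that the first term is exactly what forces the two-step assumption: for a general nilpotent algebra the quantity $\tr(\ad_{[X,Y]} \circ J)$ is the trace of a nilpotent operator composed with $J$ and need not vanish, so nilpotency alone would be insufficient.
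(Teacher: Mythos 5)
Your proof is correct and follows the same overall route as the paper: both start from the identity $P(X,Y) = -\tfrac{1}{2}\tr(\ad_{[X,Y]}\circ J + J\circ \ad_{[X,Y]}) - \tr\ad_{J[X,Y]}$ and both kill the first term by observing that the two-step condition $[[\mathfrak{g},\mathfrak{g}],\mathfrak{g}]=0$ forces $\ad_{[X,Y]}=0$. The only divergence is in the second term: the paper argues $\tr\ad_Z=0$ by choosing an orthonormal basis adapted to the splitting $[\mathfrak{g},\mathfrak{g}]\oplus[\mathfrak{g},\mathfrak{g}]^{\perp}$, so that each summand $g([Z,e_i],e_i)$ vanishes either because $e_i$ is central (two-step property) or because $[Z,e_i]\in[\mathfrak{g},\mathfrak{g}]$ is orthogonal to $e_i$; you instead invoke the general fact that $\ad_Z$ is a nilpotent endomorphism in any nilpotent Lie algebra and is therefore traceless. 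Your version is slightly more general (it establishes $\tr\ad_{J[X,Y]}=0$, i.e.\ unimodularity, for nilpotent algebras of arbitrary step, without reference to the metric), whereas the paper's version is more elementary and self-contained, using only the two-step hypothesis it already has in hand. Your closing remarks correctly identify where the two-step hypothesis is genuinely needed, namely in the first term.
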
 

\begin{proof}
Let $G$ two-step nilpotent Lie group with fixed almost K\"ahler structure and $\mathfrak{g}$ the Lie algebra of $G$. The assumption that $G$ is two-step then means that $[[\mathfrak{g},\mathfrak{g}],\mathfrak{g}]=0$, i.e. for any $X,Y \in \mathfrak{g}$ we have $\ad_{[X,Y]} =0$, so
\begin{equation*}
 P(X,Y) = - \frac{1}{2}\tr (\ad_{[X,Y]} \circ J + J \circ \ad_{[X,Y]}) - \tr \ad_{J[X,Y]} = - \tr \ad_{J[X,Y]} \,.
\end{equation*} \
Now choose an orthonormal basis $(e_i)$ of $\mathfrak{g}$ with the property that each $e_j$ lies either in $[\mathfrak{g},\mathfrak{g}]$ or in $[\mathfrak{g},\mathfrak{g}]^\perp$. Then the summands of
\begin{equation*}
 \tr \ad_{Z} = \sum_i g([Z,e_i],e_i)  
\end{equation*}
vanish since either $e_i \in [\mathfrak{g},\mathfrak{g}]$ and therefore $[\cdot,e_i] = 0$ (two-step property) or $e_i \in [\mathfrak{g},\mathfrak{g}]^\perp$ and $g([\cdot,e_i],e_i) =0$. 
\end{proof}

It should be noted that on manifolds with Chern--Ricci flat almost K\"ahler structure symplectic curvature flow reduces to anti-complexified Ricci flow introduced by H.V. Le and G. Wang in \cite{LeWang}. 

\subsection{Kodaira--Thurston manifold}
The simplest example of a symplectic nilmanifold is the Kodaira--Thurston manifold which can be realised as a product of $S^1$ and the quotient of the three-dimensional Heisenberg group 
\begin{equation*}
 H_3 := \left \{ \begin{pmatrix} 1& x_1 & x_3 \\ 0 & 1 & x_2 \\ 0& 0 & 1    \end{pmatrix}   \Bigg| x_1,x_2,x_3 \in \bR   \right \}
\end{equation*}
by the obvious integral lattice $\Gamma := H_3 \cap \mathrm{Gl}(3,\bZ) \subset H_3$. Topologically, the Kodaira--Thurston manifold is a $S^1$-bundle over a three-torus where the fibers are given by the central direction in $H_3$ and the base by the two unpreferred directions in $H_3$ and the additional $S^1$ direction. \par The Lie algebra $\mathfrak{h}_3 \oplus \bR$ of $H_3 \times \bR$ is given by generators $e_1,...,e_4$ with $[e_1,e_2] = e_3$ as the only nontrivial Lie bracket. Equivalently, of the dual basis vectors $e^1,...,e^4$ of $(\mathfrak{h}_3 \oplus \bR)^*$ the only one whose corresponding left invariant 1-form is not closed is $e^3$ with $de^3 = -e^1 \wedge e^2$. By Proposition~\ref{Lem:ChernRicFlat}, any left-invariant almost K\"ahler structure defined on the 2-step nilalgebra $\mathfrak{h}_3 \oplus \bR$ is Chern--Ricci flat and SCF leaves the symplectic form of an initial left-invariant almost K\"ahler structure unchanged. The evolution equation then is just $\partial_t J = \mathcal{R}$ or equivalently $\partial_t g= -\Ric + \Ric(J\cdot,J\cdot)$ (the equivalence can be seen by observing $0 = \partial_t \omega = (\partial_t g)(J\cdot,\cdot) + g(\partial_t J \cdot, \cdot)$). \par

Consider the following two parameter family of almost K\"ahler structures (matrices interpreted in the $e_i/e^j$ bases) with positive parameters $\alpha, \beta$:
\begin{equation*}
 \omega = e^1\wedge e^3 - e^2 \wedge e^4 \, ,
\end{equation*}
\begin{equation*}
 J = ({J}^i_{\,j}) =
 \begin{pmatrix}
  0 & 0 & -\alpha& 0 \\
  0 & 0 & 0 & \beta \\
  \alpha^{-1} & 0 & 0 & 0 \\
  0& -\beta^{-1} & 0 & 0 \\
 \end{pmatrix} \, \quad
 g=(g_{ij}) =  
 \begin{pmatrix}
  \alpha^{-1} & 0 &  0& 0 \\
  0 & \beta^{-1} & 0 & 0  \\
  0 & 0 & \alpha & 0 \\
  0&  0 & 0 & \beta \\
 \end{pmatrix} \, .
\end{equation*} \par 
Computing the connection $1$-form $A$ of the Levi--Civita connection $D$ via the Koszul formula gives
\begin{equation*}
 A= \frac{1}{2}
 \begin{pmatrix}
  0 & \alpha^2 e^3 & \alpha^2 e^2 & 0 \\
  -\alpha \beta e^3 & 0 & - \alpha \beta e^1 & 0 \\
  -e^2 & e^1 &  0 &0 \\
  0 & 0 & 0 & 0
 \end{pmatrix} \, .
\end{equation*}
The Ricci Tensor is then given by $\Ric_{jk}=R_{\;j \, kl}^{k} $, where $R = dA + A \wedge A$ is full the Riemann curvature tensor:
\begin{equation*} \Ric = \frac{1}{2}
 \begin{pmatrix}
  -\alpha \beta & 0 &  0& 0 \\
  0 & -\alpha^2 & 0 & 0 \\
  0 & 0 &  \alpha^3 \beta &0 \\
  0 & 0 & 0 & 0
 \end{pmatrix} \, .
\end{equation*}
Finally, SCF is determined by  $\partial_t g = - \Ric + \Ric(J\cdot,J\cdot)$, so 
\begin{equation*}
 \partial_t 
 \begin{pmatrix}
  \alpha^{-1} & 0 &  0& 0 \\
  0 & \beta^{-1} & 0 & 0  \\
  0 & 0 & \alpha & 0 \\
  0&  0 & 0 & \beta \\
 \end{pmatrix} 
 = \frac{1}{2} 
 \begin{pmatrix}
  2 \alpha \beta & 0 & 0 & 0 \\
  0 & \alpha^2 &0 & 0 \\
  0& 0& -2 \alpha^3 \beta & 0 \\
  0 & 0 & 0 & - \alpha^2 \beta^2
 \end{pmatrix} \,. 
\end{equation*} \par
The resulting equations $\partial_t \alpha = - \alpha^3 \beta, \; \partial_t \beta = -\frac{1}{2} \alpha^2 \beta^2$ can easily be integrated observing that $\partial_t(\alpha^{-\frac{2}{3}}\beta^{\frac{4}{3}}) = 0$. The general solution for initial values $\alpha(0) = \alpha_0, \, \beta(0) = \beta_0$ is given by
\begin{equation*}
 \alpha(t) = \alpha_0 \left(1+ \frac{5}{2} \alpha_0^2 \beta_0 \cdot t \right)^{-\frac{2}{5}}\! \! , \quad  \beta(t) = \beta_0 \left(1+ \frac{5}{2} \alpha_0^2 \beta_0 \cdot t \right)^{-\frac{1}{5}} \! \! .
\end{equation*}
Geometrically, this means that symplectic curvature flow shrinks the central directions of $H_3 \times \bR$ while expanding the unpreferred directions at inverse rates. The shrinking of the central direction in $H_3$ and that of $\bR$ occur at different rates, the former collapsing faster than the latter. The corresponding unequal expansion of the unpreferred directions $e_1, e_2$ is due to the choice of symplectic form which couples $e_1, e_3$ and $e_2, e_4$.  \par
Two quantities whose behaviour under SCF might be of interest are the (pointwise) norms of the Nijenhuis tensor and the Riemann curvature tensor.  One finds 
\begin{equation*}
 ||N||^2 = 8 \alpha^2 \beta =  \frac{8 \alpha_0^2 \beta_0}{1+ \frac{5}{2}\alpha_0^2 \beta_0 \cdot t}   \,  , \quad ||R||^2 = \frac{11}{4} \alpha^4 \beta^2 = \frac{11}{4}   \frac{\alpha_0^4 \beta_0^2}{(1+ \frac{5}{2}\alpha_0^2 \beta_0 \cdot t)^2} \,.
\end{equation*} \par
Symplectic curvature flow on the Kodaira--Thurston manifold has also been considered in \cite{LeWang} as an instance of anti-complexified Ricci flow, but it appears the example therein is faulty (e.g. the given solution doesn't satisfy the initial conditions) and we felt it would be worth including our own computation.

\subsection{Sum of two Heisenberg algebras}

The computation is similar for the product of two Heisenberg groups. The generators  $e_1,...,e_6$ of its Lie algebra can be chosen such that $[e_1,e_2] = e_5$ and $[e_3,e_4]=e_6$. The Lie algebra $\mathfrak{h}_3 \oplus \mathfrak{h}_3$ is two step, so Lemma~\ref{Lem:ChernRicFlat} can be applied. In the following, all matrices are with respect to the $e_i/e^j$ bases.  \par

Consider the following three parameter family of almost Kähler structures for positive paramters $\alpha, \beta, \gamma$:
\begin{eqnarray*}
 \omega &=& e^1 \wedge e^5 + e^2 \wedge e^4 + e^3 \wedge e^6 \, , \\
 g &=& \alpha^{-1} e^1 \otimes e^1 + \beta^{-1} e^2 \otimes e^2 + \gamma^{-1} e^3 \otimes e^3 + \beta e^4 \otimes e^4 + \alpha e^5 \otimes e^5 + \gamma e^6 \otimes e^6 \, , \\
 J &=& 
 \begin{pmatrix}
  0_3 & \begin{array}{ccc} 0& - \alpha & 0 \\ - \beta & 0 & 0 \\ 0 & 0 & -\gamma \end{array} \\
  \begin{array}{ccc} 0& \beta^{-1} & 0 \\ \alpha^{-1} & 0 & 0 \\ 0 & 0 & \gamma^{-1} \end{array} & 0_3
 \end{pmatrix} \,.
\end{eqnarray*}

As in the case of $\mathfrak{h}_3 \oplus \bR$, the flow equation of SCF can be written as $\partial_t g = -\Ric + \Ric(J\cdot, J \cdot)$, where the Ricci tensor is computed from the connection 1-form of the Levi--Civita connection via the full Riemann curvature tensor. We obtain
\begin{equation*}
 \Ric = \frac{1}{2}
 \begin{pmatrix}
  -\alpha \beta & 0 & 0 & 0 & 0 & 0 \\
  0 & - \alpha^2 & 0 & 0 & 0 & 0 \\
  0 & 0& - \gamma \beta^{-1} & 0 & 0 & 0 \\
  0 & 0& 0 & - \gamma^2 & 0 & 0 \\
  0 & 0& 0 & 0 & \alpha^3 \beta & 0 \\
  0 & 0& 0 & 0 & 0 & \gamma^3 \beta^{-1} 
 \end{pmatrix}
\end{equation*} 
and
\begin{equation*}
 \partial_t g = \frac{1}{2}
 \begin{pmatrix}
  2\alpha \beta & 0 & 0 & 0 & 0 & 0 \\
  0 & -\gamma^2 \beta^{-2} + \alpha^2 & 0 & 0 & 0 & 0 \\
  0 & 0&  2 \gamma \beta^{-1} & 0 & 0 & 0 \\
  0 & 0& 0 & - \alpha^2 \beta^2 + \gamma^2 & 0 & 0 \\
  0 & 0& 0 & 0 & -\alpha^3 \beta & 0 \\
  0 & 0& 0 & 0 & 0 & -\gamma^3 \beta^{-1} 
\end{pmatrix} \,.
\end{equation*} \par 
The resulting equations for $\alpha, \beta, \gamma$ are 

\begin{equation*}
 \partial_t \alpha = - \alpha^3 \beta, \qquad   \partial_t \beta = - \frac{1}{2} \alpha^2 \beta^2 + \frac{1}{2}\gamma^2, \qquad \partial_t \gamma = - \gamma^3 \beta^{-1}
\end{equation*}
with initial conditions $\alpha(0) = \alpha_0, \beta(0) = \beta_0, \gamma(0) = \gamma_0$. The equation for $\partial_t \beta$ can be rewritten as $2 \partial_t \log \beta = \partial_t \log \alpha/\gamma$, so $\beta/\beta_0 = (\alpha/\alpha_0)^{\frac{1}{2}}(\gamma/\gamma_0)^{-\frac{1}{2}}$. With this expression for $\beta$ the other two equations read

\begin{equation*}
 \partial_t \alpha = - L \alpha^{\frac{7}{2}} \gamma^{-\frac{1}{2}}, \qquad \partial_t \gamma = - L^{-1}\gamma^{\frac{7}{2}} \alpha^{-\frac{1}{2}} \,,
\end{equation*}
where $L = \beta_0 (\gamma_0 / \alpha_0)^{\frac{1}{2}}$. \par

In the case $\beta_0= \gamma_0 / \alpha_0$, these equations can be integrated without much difficulty and the solutions are 
\begin{equation*}
 \alpha(t) = \alpha_0(1+ 2 \alpha_0 \gamma_0 \cdot t)^{-\frac{1}{2}}, \qquad \beta(t) = \beta_0, \qquad \gamma(t) = \gamma_0(1+ 2\alpha_0 \gamma_0 \cdot t)^{-\frac{1}{2}} \,.
\end{equation*}

As on the Kodaira--Thurston manifold, symplectic curvature flow shrinks the central directions in each of the copies of $H_3$ and expands the base direction coupled to the central ones by the symplectic form at the inverse rate. \par

For general initial conditions, integration of the equations for $\alpha$ and $\gamma$ becomes more difficult. One may substitute $\xi := L^{-1}\alpha^{-3}, \, \eta := L \gamma^{-3}$. Then $\partial_t \xi = \partial_t \eta$, so $\xi = \eta + c$, where $c = L^{-1}\alpha_0^{-3} - L \gamma_0^{-3}$. The case $c=0$ corresponds exactly to the ``easy" case considered previously. The equation for $\eta$ reads
\begin{equation*}
 \partial_t \eta = 3 \eta^{\frac{1}{6}}(\eta +c)^{\frac{1}{6}} \, .
\end{equation*}
Integration is possible in terms of hypergeometric series, but the author has not pursued the analysis. Qualitatively, the behaviour is expected to be similar to the easy case with the central directions collapsing, the two base directions coupled to the central directions by the symplectic form expanding at inverse rates and the remaining two base directions coupled to each other tending to a finite scale. \par

The pointwise norms of the Nijenhuis and Riemann tensors are given by 
\begin{equation*}
 ||N||^2 = 8 (\alpha^2 \beta + \gamma^2 \beta^{-1})  \,  , \quad ||R||^2 = \frac{11}{4}(\alpha^4 \beta^2 + \gamma^4 \beta^{-2} ) \,.
\end{equation*}
In the case where $\beta_0 = \gamma_0/\alpha_0$, these reduce to 
\begin{equation*}
 ||N||^2 = 16 \alpha \gamma =  16 \frac{ \alpha_0 \gamma_0}{1+  2 \alpha_0 \gamma_0 \cdot t}   \,  , \quad ||R||^2 = \frac{11}{2} \frac{\alpha_0^2\gamma_0^2}{(1+ 2 \alpha_0 \gamma_0 \cdot t)^2}  \,.
\end{equation*}

\subsection{The nilalgebra $\mathfrak{n}_4$}

The situation changes for the nilalgebra $\mathfrak{n_4}$ with generators $e_1,...,e_4$ and $[e_1,e_2]=e_3$ and $[e_2,e_3]=e_4$ as the only nonvanishing commutators. This nilalgebra is three-step, so Lemma~\ref{Lem:ChernRicFlat} doesn't hold and SCF turns out to evolves both $\omega$ and $J$ non-trivlially. \par

The initial almost K\"ahler structure considered is $\omega_0 = e^1\wedge e^3 + e^2 \wedge e^4$ and $J_0 = e_3 \otimes e^1 + e_4 \otimes e^2 - e_1 \otimes e^3 - e_2 \otimes e^4$. $e^i \in \mathfrak{n}_4^*$.  The symplectic form $\omega_0$ is closed since $de^1 =de^2 = 0$ and $de^3 = -e^1 \wedge e^2$ and $de^4 = - e^2 \wedge e^3$. \par
To run symplectic curvature flow, $\partial_t (\omega,J)$ needs to be known on a sufficiently large space of almost K\"ahler structures on $\mathfrak{n}_4$. For computational convenience the following familiy of almost K\"ahler structures was chosen:
\begin{eqnarray*}
 \omega = e^1\wedge e^3 + e^2 \wedge e^4 + \gamma e^1 \wedge e^2, \quad
 J = 
 \begin{pmatrix} 
  0 & a' & b' & 0 \\
  a & 0 & 0 & c' \\
  b & 0 & 0 & d' \\
  0 & c & d & 0 
 \end{pmatrix} \, .
\end{eqnarray*}
The matrix $J$ is to be understood as an endomorphism of $\mathfrak{g}$ in the $e_i/e^j$ bases. The fact that $J$ is an almost complex structure imposes algebraic relations on $a,b,c,d,a',b',c',d'$:
\begin{eqnarray*}
 aa' + bb' = -1,&&  ac+bd=0 \\
 aa' + cc' = -1,&& a'c' + b'd' = 0 \\
 bb' + dd' = -1,&& ab' + c'd =0 \\
 cc' + dd' =-1,&& a'b + cd' =0 \,.
\end{eqnarray*}
The equations on the right hand side are all equivalent in light of the ones on the left, of which only three are independent. Furthermore, the compatibility condition $\omega(J\cdot,J\cdot) = \omega$ fixes $\gamma$ by $b'\gamma = a' +d$, so the above defines a four-dimensional space of almost K\"ahler structures on $\mathfrak{n}_4$. \par

The metric associated to $\omega, J$ in the $e_i/e^j$ basis is given by
\begin{equation*}
 (g_{ij}) = 
 \begin{pmatrix}
  g_{11} & 0 & 0 & g_{14}  \\  0 & g_{22} & g_{23} & 0 \\ 0 & g_{23} & g_{33} & 0 \\ g_{14} & 0 & 0 & g_{44} 
 \end{pmatrix} 
 = 
 \begin{pmatrix}
  b+\gamma a & 0 & 0 & -a  \\  0 & c-\gamma a' & -a' & 0 \\ 0 & -a' & -b' & 0 \\ -a & 0 & 0 & -c' 
 \end{pmatrix} \, .
\end{equation*} \par

Infinitesimal changes of these almost K\"ahler structures under SCF are determined by the Chern--Ricci form and the Ricci curvature (more precisely, the $(2,0)+(0,2)$-part of the Ricci curvature, since $2g^{-1}\Ric^{(2,0)+(0,2)}=J\mathcal{R}$).  \par To compute them, let $D$ denote the Levi--Civita connection of the left-invariant metric $g$. Its connection 1-form $A$ in the $e_i/e^j$ is the element of $\operatorname{End}(\mathfrak{n}_4) \otimes \mathfrak{n}_4^*$ given by
\begin{equation*}
 2g(e_k,D_{e_j}e_i) = 2g(e_k, A^l_{\, ij} e_l) = g([e_j,e_i],e_k) - g([e_j,e_k],e_i) - g([e_i,e_k],e_j)
\end{equation*}
or, more explicitly, by 
\begin{eqnarray*}
 2d_{14}d_{23}A &=& 
 \begin{pmatrix}
  0 & 0 & 0 & 0 \\
  0 & -g_{23}(g_{33} - g_{14})d_{14} & - g_{33}(g_{33} - g_{14})d_{14} & 0 \\
  0 & g_{22}(g_{33} - g_{14})d_{14} & g_{23}(g_{33} - g_{14})d_{14} &0 \\
  0 & 0 & 0 &0 
 \end{pmatrix}
 e^1 \\ &+&
 \begin{pmatrix}
  0 & 0 & 0 & 0 \\ 
  0 & g_{23}g_{44}d_{14} &  g_{33}g_{44}d_{14}& 0 \\
  0 & -g_{22}g_{44}d_{14} & -g_{23}g_{44}d_{14}&0 \\
  0 & 0 & 0 & 0 
 \end{pmatrix}
 e^4 \\ &+&
 \begin{pmatrix}
  0 & 2g_{23}g_{44}d_{23} & g_{33}g_{44}d_{23} & 0 \\
  -g_{23}(g_{33}-g_{14})d_{14} & 0 & 0 & g_{23}g_{44}d_{14} \\
  -(d_{23}+g_{22}g_{14}-g_{23}^2)d_{14} & 0 & 0 & -g_{22}g_{44}d_{14} \\
  0 & -2g_{23}g_{14}d_{23} & (d_{14}-g_{14}g_{33})d_{23} & 0 
 \end{pmatrix}
 e^2 \\ &+&
 \begin{pmatrix}
  0 & g_{33}g_{44}d_{23} & 0 & 0 \\
  -g_{33}(g_{33}-g_{14})d_{14} & 0 & 0 & g_{33}g_{44}d_{14} \\
  g_{23}(g_{33}-g_{14})d_{14} & 0 & 0 & -g_{23}g_{44} d_{14} \\
  0 & -(d_{14} + g_{14}g_{33})d_{23} & 0 & 0 
 \end{pmatrix}
 e^3 \,.
\end{eqnarray*}
Here $d_{14} = g_{11}g_{44}-g_{14}^2$ and $d_{23} = g_{22}g_{33}-g_{23}^2$. Observe $d_{14}d_{23}=\det g_{ij} = \det \omega_{ij} \cdot \det J$. For $\omega, J$ in the considered family, this is equal to $\det J=\det J_0 = 1$.  \par

With $A$ known, the Riemann curvature $F_D$  is then given by the endomorphism valued 2-form $A \wedge A + dA$. The Ricci curvature viewed as endomorphism of $\mathfrak{n}_4$ by means of $g$ turns to out to be 
\begin{equation*}
 Rc = g^{-1}\Ric = 
 \begin{pmatrix}
  -g_{33}^2 g_{44} & 0 & 0 & 0 \\ 
  0 & -g_{44}(g_{33}^2 +d_{14}) & 0 & 0 \\
  0 & 2g_{23}g_{33}g_{44} & g_{44}(g_{33}^2-d_{14}) & 0 \\
  g_{14}(g_{33}^2+d_{14}) & 0 & 0 & g_{44}d_{14} 
 \end{pmatrix} \, .
\end{equation*}
Computing the commutator $[Rc,J]$ and expressing the $g_{ij}$ in terms of entries of $J$  yields for $2\mathcal{R}$:
\begin{equation} \label{Eq:CommPart}
 \begin{pmatrix} \; \; \;\quad  0 \qquad \qquad \qquad \qquad \qquad \; a'c'(b'^2 - d_{14})               &   \!\!\!\!\!\!\!        b'c'(2b'^2 - d_{14})  \qquad \qquad \qquad \qquad 0 \\
  \!\!\!\!\!\!\!\!    ac'(b'^2 \!+\!2d_{14}) \qquad \qquad \qquad \qquad \qquad  0                     &      \;\;\; \quad 0  \quad \qquad \qquad \qquad \quad c'^2(b'^2 \! + \!2d_{14}) \\
  \!\!\!\!\!\!\!\!\!\!\!\!\!\!\!\! -2aa'b'c' \!-\! bc'(2b'^2\!\!-\!d_{14})\!+\!ad'(b'^2\!+\!d_{14}) \; \; \, 0 &     \qquad 0 \qquad \quad \! -\! 2ca'b'c' \! - \! d'c'(b'^2 \! \! - \! 2d_{14}) \\
  \qquad \; \; 0 \, - \!  aa'(b'^2\!\!+\!d_{14}) \!-\! cc'(b'^2 \!\!+\! 2 d_{14}) \!+\! 2d a' b' c'                          &    \!  \!\!\!\!\!\!\!  - ab'(b'^2\! \! +\!d_{14}) \!+\! d c'(b'^2 \! \!- \!2d_{14}) \qquad \; 0
 \end{pmatrix}
\end{equation} \par
%I apologize for this messy matrix and gratefully accept proposals on how to present AND tex it more neatly. 

The second quantity required too write out the SCF equations explicitly is the Chern--Ricci tensor $P$, for which a convenient expression was derived in Lemma~\ref{Lem:A}: 
\begin{equation*}
 P = \tr (J dA) \, .
\end{equation*}
With the $A$ given above it is $P = c' e^1 \wedge e^2$. Furthermore,
\begin{equation} \label{Eq:ChernRicPart}
 -2 g^{-1}P^{(2,0)+(0,2)} = 
 \begin{pmatrix}
  0 & -b'c' & 0 & 0 \\
  c'^2 & 0 & 0 & 0 \\
  d'c'+ac' & 0 & 0 & c'^2 \\
  0 &  b'd' + ab' & -b'c' & 0  
 \end{pmatrix}  \, .
\end{equation} \par

Along with the expression for $\mathcal{R}$ found in Equation~(\ref{Eq:CommPart}) this constitutes the evolution equation $\partial_t J = -2 g^{-1}P^{(2,0)+(0,2)} + \mathcal{R}$. Setting $y(t) = (1+5/2 \cdot t)^{1/5}$, the explicit solution to this ODE with the initial condition $J(0) = J_0$ is given by 

\begin{equation*}
 \begin{array}{cclcccl}
  a & \! = \! & y^{-1} - y^{-3} &\!\!, \quad & b & \! = \! &  2y^{-1} - y^{-3}\, \\
  c & \!=\!   &  2y-y^{-1}       &\!\!,\quad & d  & \!=\! & -y + y^{-1} \, \\
  a' \! &\!=\! & -y + y^{-1}      &\!\!, \quad & b' \!&\!=\!& -y^{-1}\, \\ 
  c' \!&\!=\!& -y^{-3}               &\!\!,\quad & d' \!&\!=\!& y^{-1} - y^{-3} \,.
 \end{array}
\end{equation*}\par 
For the evolution of $\omega$ according to $\partial_t \omega = -2 P$ with $\omega(0) = \omega_0$ one obtains
\begin{equation*}
 \omega(t) = e^1 \wedge e^3 + e^2 \wedge e^4 + 2(y^2 -1)e^1 \wedge e^2 
\end{equation*}	
and the metric evolves as
\begin{equation*} (g_{ij}) = g(e_i,e_j) = 
 \begin{pmatrix}
  2y - 2y^{-1} + y^{-1} & 0 & 0 & -y^{-1} + y^{-3}  \\
  0 & 2y^3 - 2 y +y^{-1} & y - y^{-1} & 0 \\
  0 &y - y^{-1} & y^{-1} & 0 \\
  -y^{-1} + y^{-3} & 0 & 0 & y^{-3} 
 \end{pmatrix} \,.
\end{equation*}

The Nijenhuis tensor $(N_{ij})$ in the $e_i/e^j$ basis is given by 
\begin{equation*}
 \begin{pmatrix} 
  0 & (2y^{\!-4} \! \! - \! y^{\!-6})(e_2\! + \!e_3) & -(y^{\!-4} \! \! - \! y^{\!-6})(e_2 \! + \!e_3) & y^{\!-4} (e_1 \! - \! e_4) \\
  - (2y^{\!-4} \! \! - \! y^{\!-6})(e_2\! + \!e_3)  & 0 & -y^{\!-2} (e_1 \! - \! e_4) & -(y^{\!-4} \! \! - \! y^{\!-6}) (e_2 \! + \!e_3) \\
  (y^{\!-4} \! \! - \! y^{\!-6})(e_2 \! + \!e_3) & y^{\!-2} (e_1 \! - \! e_4) & 0 & -y^{\!-6}(e_2 \! + \! e_3) \\
  -y^{\!-4} (e_1 \! - \! e_4) & (y^{\!-4} \! \! - \! y^{\!-6}) (e_2 \! + \!e_3) & y^{\!-6}(e_2 \! + \! e_3) & 0
 \end{pmatrix}
\end{equation*}
from which its norm can be computed with a bit of work. The leading order turns out to be $y^{-5}$ or equivalently $t^{-1} $ as in the Kodaira--Thurston case.

\section{Outlook}

It has been conjectured in \cite{StreetsTian} that SCF exists for as long as long as the cohomology class of $\omega(t)$ stays inside the symplectic cone  $\mathcal{C} \subset H^2(X,\bR)$. In the case of left-invariant almost K\"ahler structures on Lie groups, the tangent bundle is trivial and the first Chern class, represented by a multiple of $P$ vanishes. This means that the symplectic class is stable under SCF and the conjecture then says that the flow should exist for all times. We have confirmed the long time existence for the examples examined in the second part of this article and it would be interesting to see whether this is true in general for SCF on left-invariant almost K\"ahler structures on Lie groups. In any case, one might hope to express the limiting structure or the singularity formation in terms of the initial data, ideally of the symplectic class and the Lie algebra. \par
Is is known that, topologically, compact Nilmanifolds are iterated torus bundles (cf \cite{Nilmanifolds}). In the cases examined in this article, it seems that --- in some imprecise sense --- these fibres collapse under SCF. Studying the interaction between the iterated bundle structure and the flow might help answer the questions on the limiting structures and long time existence of SCF. \par
There are other examples of symplectic manifolds with vanishing first Chern class coming from hyperbolic geometry. They are resolutions of orbifold twistor spaces of hyperbolic four-orbifolds (cf. \cite{FinePanov2} and \cite{FinePanov3}). It would be interesting to study SCF and confirm the conjecture in these cases. \par

\small

\printindex

\end{document}